\documentclass[12pt]{article}
\usepackage{amsthm,wasysym,amssymb,eufrak,indentfirst,color,graphicx,cite,mathrsfs}
\textwidth 6.6in \textheight 8.4in \footskip 0.3in
\parskip 0.0in
\oddsidemargin 0in \evensidemargin 0in
\begin{document}
\baselineskip 17pt
\newtheorem{theorem}{Theorem}[section]
\newtheorem{lemma}[theorem]{Lemma}
\newtheorem{proposition}[theorem]{Proposition}
\newtheorem{example}[theorem]{Example}
\newtheorem{definition}[theorem]{Definition}
\newtheorem{corollary}[theorem]{Corollary}
\newtheorem{remark}[theorem]{Remark}
\newtheorem{problem}[theorem]{Problem}
\title{\textbf{On supersolubility of finite groups admitting a Frobenius group of automorphisms with fixed-point-free kernel}\thanks{The research is supported by an NNSF grant of China (grant No. 11371335) and Wu Wen-Tsun Key Laboratory of Mathematics, USTC, Chinese Academy of Science. The second author is also supported by the Start-up Scientific Research Foundation of Nanjing Normal University (grant No. 2015101XGQ0105) and a project funded by the Priority Academic Program Development of Jiangsu Higher Education Institutions.}}
\author{{Xingzheng Tang$^{1}$, Xiaoyu Chen$^{2,}$\thanks{Corresponding author.}, Wenbin Guo$^{1}$}\\
{\small $^1$School of Mathematical Sciences, University of Science and Technology of China,}\\
{\small Hefei 230026, P. R. China}\\
{\small $^2$School of Mathematical Sciences and Institute of Mathematics, Nanjing Normal University,}\\
{\small Nanjing 210023, P. R. China}\\
{\small E-mails: tangxz@mail.ustc.edu.cn, jelly@njnu.edu.cn, wbguo@ustc.edu.cn}}

\date{}
\maketitle
\begin{abstract}
Assume that a finite group $G$ admits a Frobenius group of automorphisms $FH$ with kernel $F$ and complement $H$ such that $C_{G}(F)=1$. In this paper, we investigate this situation and prove that if $C_G(H)$ is supersoluble and $C_{G'}(H)$ is nilpotent, then $G$ is supersoluble. Also, we show that $G$ is a Sylow tower group of a certain type if $C_{G}(H)$ is a Sylow tower group of the same type.
\end{abstract}
\renewcommand{\thefootnote}{\empty}
\footnotetext{Keywords: Frobenius group, Automorphisms, Supersoluble group, $p$-supersoluble group, Sylow tower group.}
\footnotetext{Mathematics Subject Classification (2010): 20D45, 20D10.}

\section{Introduction}

Throughout this paper, all groups mentioned are assumed to be finite. $G$ always denotes a group, $p$ denotes a prime, $\pi$ denotes a set of primes, and $\mathbb{P}$ denotes the set of all primes. For any group $G$, we use the symbol $\pi(G)$ to denote the set of prime divisors of $|G|$.

Recall that a Frobenius group $FH$ with kernel $F$ and complement $H$ can be characterized as a group which is a semidirect product of a normal subgroup $F$ by $H$ such that $C_F (h) = 1$ for every non-identity element $h$ of $H$. Recently, much research was focused on the case when a Frobenius group $FH$ acts on a group $G$ such that $F$ acts fixed-point-freely, that is, $C_G(F)=1$. It was shown that various properties of $G$ are close to the corresponding properties of $C_G(H)$ in this situation, see \cite{K4,K5,K3,Shu,Shu1,Mak,K2,KMS}. For instance, E. I. Khukhro, N. Y. Makarenko and P. Shumyatsky \cite{KMS} proved that the rank of $G$ is bounded in terms of $|H|$ and the rank of $C_G(H)$, and $G$ is nilpotent if $C_G(H)$ is nilpotent; E. I. Khukhro \cite{K2} established that the Fitting height of $G$ is equal to the Fitting height of $C_G(H)$; P. Shumyatsky \cite{Shu} proved that if $F$ is cyclic and $C_G(H)$ satisfies a positive law of degree $k$, then $G$ satisfies a positive law of degree that is bounded solely in terms of $k$ and $|FH|$.

The main aim of this paper is to discuss the problem which was proposed by E. I. Khukhro in Fourth Group Theory Conference of Iran (see \cite{K1}). In the above situation, he asked that if $C_G(H)$ is supersoluble, whether a group $G$ is supersoluble or not. Though this problem has not been solved, we can give a positive answer if we suppose further that $C_{G'}(H)$ is nilpotent. In fact, a more generalized result is obtained. In Section 3, we prove that $G$ is $p$-supersoluble if $C_G(H)$ is $p$-supersoluble and $C_{G'}(H)$ is $p$-nilpotent. Moreover, we show that $G$ is a Sylow tower group of a certain type if $C_{G}(H)$ is a Sylow tower group of the same type.

\section{Preliminaries}

The following results are useful in our proof.

\begin{lemma}\label{2.1}(see \cite[Theorem 0.11]{BB}.)  Suppose that a group $G$ admits a nilpotent group of automorphisms $F$ such that $C_{G}(F)=1$. Then $G$ is soluble. \end{lemma}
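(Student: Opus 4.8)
The plan is to argue by contradiction, taking a non-soluble group $G$ of smallest order that admits a nontrivial nilpotent automorphism group $F$ with $C_G(F)=1$. Since $F$ is nilpotent and nontrivial, $Z(F)\neq 1$, so fix $z\in Z(F)$ of prime order. As $z$ is central in $F$, the subgroup $C_G(z)$ is $F$-invariant, and $z$ acts trivially on it, so the nilpotent group $F/\langle z\rangle$ acts on $C_G(z)$ with $C_{C_G(z)}(F/\langle z\rangle)=C_G(F)=1$; since $z$ is a nontrivial automorphism, $C_G(z)$ is a proper subgroup, and minimality of $G$ makes it soluble (and, by the same reasoning, $C_G(Z(F))$ is soluble). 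It should be stressed at the outset that this alone is far too weak: solubility of the centraliser of a single prime-order automorphism does not force solubility of the group — the inner automorphism of $A_5$ induced by $(12)(34)$ already has a Klein four centraliser — so the full group $F$ must be kept in play throughout.

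The next step is to reduce, via the generalised Fitting subgroup, to an almost-simple situation. The soluble radical $R$ of $G$ is characteristic, hence $F$-invariant, with $C_R(F)=1$; if one can arrange $R=1$, then $F^{*}(G)=E(G)$ is a direct product $S_1\times\cdots\times S_k$ of non-abelian simple groups, $G$ embeds into $\mathrm{Aut}(S_1\times\cdots\times S_k)$ with $E(G)$ as its inner part (Schreier's conjecture, hence the classification, is already in use here), and the goal becomes to rule out any nilpotent $F$ with $C_G(F)=1$. The difficulty — which is the recurring technical obstacle of the whole argument — is that the action of $F$ need not be coprime to $|G|$, so the standard fixed-point lifting $C_{G/N}(F)=C_G(F)N/N$ fails and one cannot freely pass to quotients by $F$-invariant normal subgroups; the reduction past $R$ therefore has to be engineered through $F$-invariant subgroups rather than through quotients.

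The heart of the matter is the case where, after these reductions, $G$ is a direct product $S^{k}$ of copies of one non-abelian simple group $S$ with $FG$ permuting the $k$ factors. If $F$ had more than one orbit on the factors, the orbit-products would be proper $F$-invariant normal subgroups, hence soluble by minimality, forcing $G$ soluble; so $F$ is transitive on the factors, and the standard correspondence identifies $C_{S^{k}}(F)$ with $C_S(F_1)$, where $F_1$ is the stabiliser in $F$ of one factor, acting on $S$ as a nilpotent subgroup of $\mathrm{Aut}(S)$ through a twisted diagonal. Here $F_1\neq 1$ (otherwise $C_{S^{k}}(F)\cong S\neq 1$), and if $k\geq 2$ then $S$ is a smaller non-soluble group with the nontrivial nilpotent group $F_1$ acting on it fixed-point-freely, contradicting minimality — so $k=1$ and $G=S$ is simple. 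Everything thus comes down to the assertion that \emph{no non-abelian simple group admits a nontrivial nilpotent group of automorphisms with trivial fixed-point subgroup}: one splits off the inner part $F\cap\mathrm{Inn}\,S$, notes that if it is nontrivial it meets $Z(F)$ and so supplies an inner automorphism of prime order whose centraliser is nontrivial, proper, and (by minimality) soluble, and then pushes through a case-by-case analysis over the simple groups of the diagonal, field and graph automorphisms and of the solubility of such local centralisers. I expect this final, classification-dependent analysis of the simple groups to be the main obstacle.
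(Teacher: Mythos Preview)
The paper does not prove this lemma at all: it is simply quoted from Belyaev--Hartley \cite{BB} (their Theorem~0.11), so there is no argument in the paper to compare your attempt against.

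What you have written is a faithful outline of the standard CFSG-based proof, and the reductions you sketch are the right ones. The solubility of $C_G(z)$ for a prime-order $z\in Z(F)$, the passage to the generalised Fitting subgroup, the transitivity of $F$ on the simple direct factors of the socle, and the fixed-point identification $C_{S^k}(F)\cong C_S(F_1)$ that collapses everything to a single non-abelian simple $S$ are all correct in spirit. You are also right to flag the two genuine obstacles rather than bluff past them: the reduction past the soluble radical $R$ cannot be done by the naive quotient argument (since $C_{G/R}(F)=C_G(F)R/R$ may fail without a coprimeness hypothesis), and the endgame --- that no finite non-abelian simple group admits a nontrivial nilpotent group of automorphisms acting fixed-point-freely --- is an irreducibly classification-dependent case check. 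Both of these are real work; you have located them accurately but not carried them out, and your closing sentence says as much.

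In short, your sketch is correct as a plan and already goes further than the paper, which defers the entire statement to \cite{BB}; but it remains a plan rather than a proof, with the hard steps still owed to the literature.
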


\begin{lemma}\label{2.2}(see \cite[Lemma 2.2]{KMS}.) Let $G$ be a group admitting a nilpotent group of automorphisms $F$ such that $C_{G}(F)=1$. If $N$ is an $F$-invariant normal subgroup of $G$, then $C_{G/N} (F)=1$. \end{lemma}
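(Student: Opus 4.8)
The plan is to argue by contradiction, choosing a counterexample $(G,F,N)$ for which $(|G|,|N|)$ is lexicographically least, and to cut it down to a very rigid shape before deriving the contradiction. By Lemma~\ref{2.1} the group $G$ is soluble. If $1<M<N$ with $M$ an $F$-invariant normal subgroup of $G$, minimality forces $C_{G/M}(F)=1$, and then applying the hypothesis to $(G/M,F,N/M)$ contradicts minimality again; so $N$ is a minimal $F$-invariant normal subgroup of $G$, hence (as $G\rtimes F$ is soluble, $N$ being minimal normal in it) an elementary abelian $p$-group. The preimage in $G$ of $C_{G/N}(F)$ is an $F$-invariant subgroup with trivial $F$-fixed points and the same relevant quotient, so minimality of $|G|$ makes it all of $G$, i.e.\ $[G,F]\le N$. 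Finally, replacing $G$ by the preimage of a prime-order subgroup of $G/N$ and using minimality once more, $G/N$ is cyclic of prime order $r$. Thus it remains to contradict: $G>N$; $N$ elementary abelian $p$ and minimal $F$-invariant normal in $G$; $[G,F]\le N$; $G/N$ cyclic of order $r$; $C_G(F)=1$.

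If $r=p$, then $G=\langle x\rangle N$ is a $p$-group; write $F=F_p\times F_{p'}$. Since $F_{p'}$ acts coprimely on $G$ and fixes $G/N$ pointwise, $C_{G/N}(F_{p'})=C_G(F_{p'})N/N$ forces $C_G(F_{p'})N=G$, so $G_0:=C_G(F_{p'})\ne1$; as $G_0$ is $F_p$-invariant and a nontrivial $p$-group on which a nontrivial $p$-group acts has a nontrivial fixed point, the case $F_p\ne1$ gives $1\ne C_{G_0}(F_p)=C_G(F)$, a contradiction, while $F_p=1$ gives $1=C_G(F)N/N=C_{G/N}(F)=G/N$, again a contradiction.

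If $r=q\ne p$, choose a complement $T$ to $N$ in $G$ (Schur--Zassenhaus), $|T|=q$. A short computation shows $[N,T']=[N,T]$ for every complement $T'$ (the complements are $N$-conjugate and $N$ is abelian), so $[N,T]$ is both normal in $G$ and $F$-invariant; minimality of $N$ gives $[N,T]=1$ or $[N,T]=N$. In the first case $T=O_q(G)$ is characteristic, $[T,F]\le N\cap T=1$, so $T\le C_G(F)=1$ --- impossible. So $[N,T]=N$: $T$ acts fixed-point-freely on $N$ and $G=N\rtimes T$ is a Frobenius group with kernel $N$. Now an $F$-invariant complement to $N$ would map isomorphically onto $G/N$, hence be fixed pointwise by $F$ and so lie in $C_G(F)=1$ --- also impossible; thus it suffices to produce one. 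The complements form a set $\Omega$ on which $N$ acts regularly and $N\rtimes F$ acts compatibly, and identifying $\Omega$ with $N$ one sees that $F$ acts on it by affine maps, with linear part its action $\beta$ on $N$ and translation part a cocycle $n_0\in Z^1(F,N_\beta)$; an $F$-fixed point of $\Omega$ exists precisely when $[n_0]=0$ in $H^1(F,N_\beta)$.

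The crux is therefore to show $H^1(F,N_\beta)=0$. When $p\nmid|F|$ this is immediate by coprimeness. When $p\mid|F|$, write $F=F_p\times F_{p'}$ with $F_p\ne1$; since $C_N(F)\le C_G(F)=1$, the $F_p$-invariant $p$-group $M:=C_N(F_{p'})$ has $C_M(F_p)=C_N(F)=1$, and (again: a nontrivial $p$-group on which a nontrivial $p$-group acts has a nontrivial fixed point) $M=1$, so $N_\beta$ has no trivial $F_{p'}$-composition factor; then the coprime vanishing of $H^{>0}(F_{p'},-)$ together with the Lyndon--Hochschild--Serre spectral sequence for $1\to F_p\to F\to F_{p'}\to1$ gives $H^1(F,N_\beta)\cong H^1(F_p,N_\beta)^{F_{p'}}=0$. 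This completes the contradiction. Nilpotency of $F$ is used essentially --- through the splitting $F=F_p\times F_{p'}$ and the coprime-action arguments on the two factors --- and the statement does fail for general, non-nilpotent, fixed-point-free groups of automorphisms.
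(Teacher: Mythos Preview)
The paper does not prove this lemma itself; it simply cites \cite[Lemma~2.2]{KMS}. Your argument is correct, but it is considerably more elaborate than the standard one. The usual proof is only a few lines: by Lemma~\ref{2.1} the semidirect product $GF$ is soluble, and since $C_G(F)=1$ one checks directly that $N_{GF}(F)=F$, so the nilpotent subgroup $F$ is a Carter subgroup of $GF$. Carter subgroups of soluble groups map onto Carter subgroups of every quotient, so $FN/N$ is self-normalizing in $GF/N$; hence any $gN\in C_{G/N}(F)$ normalizes $FN/N$ and therefore lies in $FN/N\cap G/N=(FN\cap G)/N=N/N=1$. (This is exactly the Carter-subgroup viewpoint the paper itself uses later, in the proof of Lemma~\ref{2.6}.)

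Your route---a minimal-counterexample reduction to $N$ elementary abelian of exponent $p$, $[G,F]\le N$, and $|G/N|$ prime, followed by a case split and, in the case $r\ne p$, a cocycle description of the set of complements together with the inflation--restriction (Lyndon--Hochschild--Serre) argument forcing $H^1(F,N)=0$---does go through; every step checks out, including the key point that $C_N(F_{p'})=1$ makes $H^1(F_p,N)^{F_{p'}}$ vanish because $H^1(F_p,N)$ is, as an $F_{p'}$-module, a subquotient of copies of $N$ with no trivial composition factor. It is also a nice illustration of exactly where the decomposition $F=F_p\times F_{p'}$, and hence the nilpotency of $F$, is doing the work. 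But all of this machinery is subsumed by the single structural observation that Carter subgroups survive passage to quotients.
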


\begin{lemma}\label{2.3}(see \cite[Lemma 2.3]{KMS}.) Suppose that a group $G$ admits a Frobenius group of automorphisms $FH$ with kernel $F$ and complement $H$. If $N$ is an $FH$-invariant normal subgroup of $G$ such that $C_{N}(F)$=1, then $C_{G/N}(H)$= $C_{G}(H)N/N$. \end{lemma}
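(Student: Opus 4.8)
The inclusion $C_{G}(H)N/N\subseteq C_{G/N}(H)$ is immediate, since the image in $G/N$ of an $H$-fixed element of $G$ is again $H$-fixed. I would establish the reverse inclusion by induction on $|N|$. As a Frobenius kernel is nilpotent, Lemma~\ref{2.1} applied to $N$ shows that $N$ is soluble, and the case $N=1$ is trivial. If $N\neq 1$, choose a nontrivial subgroup $M\leq N$ that is normal in $G$, $FH$-invariant, and minimal with these properties. Then $M$ has no proper nontrivial subgroup of this kind, so, being soluble, $M$ is an elementary abelian $p$-group for some prime $p$; moreover $C_M(F)\leq C_N(F)=1$. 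Now $G/M$ again admits $FH$, the subgroup $N/M$ is normal and $FH$-invariant, and $C_{N/M}(F)=1$ by Lemma~\ref{2.2} (applied to the group $N$ and its $F$-invariant normal subgroup $M$). By induction, $C_{(G/M)/(N/M)}(H)=C_{G/M}(H)(N/M)/(N/M)$, which under the isomorphism $(G/M)/(N/M)\cong G/N$ says that every element of $C_{G/N}(H)$ is represented by some $xN$ with $xM\in C_{G/M}(H)$. So the whole statement reduces to the case $N=M$; that is, it suffices to prove: if $M$ is an elementary abelian $p$-group, $FH$-invariant, normal in $G$, with $C_M(F)=1$, then $C_{G/M}(H)=C_G(H)M/M$.

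For this, fix $gM\in C_{G/M}(H)$ and a preimage $g\in G$, and form $\Gamma=G\rtimes FH$. The relations $g^{h}=g\,m_h$ ($h\in H$) define a $1$-cocycle $h\mapsto m_h$ of $H$ with coefficients in the $\mathbb{F}_p[H]$-module $M$ (the action being conjugation in $\Gamma$), and the coset $gM$ contains an $H$-fixed element exactly when this cocycle is a coboundary; hence it suffices to show $H^{1}(H,M)=0$. If $p\nmid|H|$ this is automatic. Suppose $p\mid|H|$. Since $FH$ is a Frobenius group, $|F|$ and $|H|$ are coprime, so $p\nmid|F|$; hence $F$ acts coprimely on $M$ and $M^{F}=C_M(F)=0$. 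Because $H$, hence every Sylow $p$-subgroup $P$ of $H$, acts fixed-point-freely on $F$, a standard Clifford-theoretic argument shows that $P$ permutes the nontrivial homogeneous $F$-components of $M$ (over a splitting field) in regular orbits --- for $F$ abelian: if $1\neq x\in P$ stabilizes the $\lambda$-component then $[F,x]\leq\ker\lambda$, while fixed-point-freeness forces $[F,x]=F$, so $\lambda=1$; the general case reduces to the homogeneous one by Clifford theory. Consequently the restriction of $M$ to $P$ is a free $\mathbb{F}_p[P]$-module, so $M$ is projective as an $\mathbb{F}_p[H]$-module and $H^{i}(H,M)=0$ for all $i\geq 1$. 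Thus the cocycle is a coboundary, so there is $c\in gM$ with $c\in C_G(H)$, giving $gM=cM\in C_G(H)M/M$.

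Putting it together: given $gN\in C_{G/N}(H)$, the inductive step yields $x\in G$ with $xN=gN$ and $xM\in C_{G/M}(H)$, and the special case, applied to $M$, yields $c\in C_G(H)$ with $cM=xM$; then $c\in xM\subseteq xN=gN$, so $gN=cN\in C_G(H)N/N$, closing the induction.

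The crux --- and the step I expect to be the only real difficulty --- is the special case, precisely the vanishing $H^{1}(H,M)=0$. It is here that the two fixed-point hypotheses interact: $C_M(F)=1$ (together with the coprimeness of $|F|$ and $|H|$ built into the Frobenius structure) removes the trivial $F$-constituent of $M$ and makes $F$ act coprimely, while the fixed-point-free action of $H$ on $F$ forces the remaining $F$-constituents to be permuted freely by a Sylow $p$-subgroup of $H$, which is exactly what makes $M$ projective over $\mathbb{F}_p[H]$. Everything else --- the reduction to the elementary abelian case --- is routine and uses only Lemmas~\ref{2.1} and~\ref{2.2}.
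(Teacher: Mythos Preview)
The paper does not supply its own proof of this lemma; it simply quotes \cite[Lemma~2.3]{KMS}. Your argument is correct and is essentially the standard one underlying that reference: reduce by induction on $|N|$ (using Lemmas~\ref{2.1} and~\ref{2.2}) to the case where $N=M$ is an elementary abelian $p$-group with $C_M(F)=1$, translate the existence of an $H$-fixed point in a coset $gM$ into the vanishing of $H^1(H,M)$, and prove the latter by showing that $M$ is free as an $\mathbb{F}_p[P]$-module for a Sylow $p$-subgroup $P$ of $H$.

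One remark on presentation: your sentence ``the general case reduces to the homogeneous one by Clifford theory'' is too terse to stand on its own. For non-abelian $F$ the clean way to see that no $1\neq x\in P$ can fix a nontrivial $F$-homogeneous component is either (i) induction on $|F|$ via $Z(F)$, using that $Z(F)H$ and $(F/Z(F))H$ are again Frobenius (by coprimeness, $C_{F/Z(F)}(x)=C_F(x)Z(F)/Z(F)=1$), or (ii) Glauberman's lemma on coprime actions together with Brauer's permutation lemma: since $C_F(x)=1$, the only $x$-invariant conjugacy class of $F$ is $\{1\}$, hence the only $x$-invariant irreducible $\overline{\mathbb{F}_p}$-character of $F$ is the trivial one. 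Either route justifies the free $\mathbb{F}_p[P]$-module claim; the rest of your proof is fine as written.
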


\begin{lemma}\label{2.4} (see \cite[Lemma 2.2]{K2}.)  Let $FH$ be a Frobenius group with kernel $F$ and complement $H$. In any action of $FH$ with nontrivial action of $F$, the complement $H$ acts faithfully. \end{lemma}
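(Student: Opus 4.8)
The plan is to reduce everything to the standard structural fact about Frobenius groups that \emph{every normal subgroup of $FH$ is either contained in $F$ or contains $F$}, and then observe that the kernel of the given action, being normal in $FH$, cannot contain $F$. In detail, regard the action as a homomorphism $\theta\colon FH\to A$, where $A$ is the relevant automorphism (or symmetric) group, and set $K=\ker\theta\trianglelefteq FH$. The hypothesis that $F$ acts nontrivially says $F\not\le K$, and the conclusion we want is precisely that $\theta|_H$ is injective, i.e. $K\cap H=1$.

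First I would establish the structural fact. Let $N\trianglelefteq FH$ with $N\not\le F$. Since $FH\setminus F$ is the union of the nontrivial parts of the conjugates $H^{f}$ ($f\in F$) — a standard partition, resting on $N_{FH}(H)=H$ together with the trivial intersection of distinct complement-conjugates — the subgroup $N$ contains a conjugate of some $1\ne h\in H$, and by normality of $N$ we may take $h\in N\cap H$. For every $f\in F$ we then get $[f,h]=f^{-1}f^{h}\in N\cap F$. Now $h$ acts on $F$ coprimely (since $\gcd(|F|,|H|)=1$) and fixed-point-freely, so the map $f\mapsto f^{-1}f^{h}$ is injective on $F$: if $f^{-1}f^{h}=g^{-1}g^{h}$, a one-line manipulation gives $gf^{-1}=(gf^{-1})^{h}$, hence $gf^{-1}\in C_{F}(h)=1$. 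As $F$ is finite this map is a bijection, so $F=\{\,f^{-1}f^{h}:f\in F\,\}\subseteq N$, as claimed. (Note the computation never uses commutativity of $F$, so it applies to nonabelian kernels too.)

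Applying this with $N=K$ and using $F\not\le K$, we are forced into the case $K\le F$, whence $K\cap H\le F\cap H=1$; thus $\theta|_H$ is injective, which is exactly the statement that $H$ acts faithfully.

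I do not expect a serious obstacle here beyond having the structural lemma at hand: its real content — and the one spot needing a genuine (if short) argument — is the bijectivity of $f\mapsto f^{-1}f^{h}$ for a coprime fixed-point-free automorphism $h$, together with the two standard Frobenius-group facts invoked, namely $\gcd(|F|,|H|)=1$ and the partition of $FH$ into $F$ and the nontrivial parts of the conjugates of $H$. If one prefers not to cite the structural lemma, the same two ingredients let one argue directly that $K$, being normal in $FH$ and not containing $F$, must meet $H$ trivially.
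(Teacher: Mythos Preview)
Your argument is correct. The paper does not actually prove this lemma; it merely cites it from \cite[Lemma 2.2]{K2}, so there is no ``paper's own proof'' to compare against. What you have supplied is the standard self-contained justification: the kernel $K$ of the action is normal in $FH$, and the classical dichotomy for normal subgroups of a Frobenius group (either $N\le F$ or $F\le N$) forces $K\le F$ once $F\not\le K$, whence $K\cap H=1$. Your derivation of the dichotomy via the bijectivity of $f\mapsto f^{-1}f^{h}$ for a fixed-point-free $h\in H$ is the usual one and is cleanly executed; the only places where you lean on background are the Frobenius partition $FH\setminus F=\bigcup_{f\in F}(H^{f}\setminus\{1\})$ and $\gcd(|F|,|H|)=1$, both of which are standard and could be cited from any textbook treatment of Frobenius groups. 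In short, the proposal is sound and in fact more informative than the paper, which simply quotes the result.
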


\begin{lemma}\label{2.5}  Let $G$ be a non-trivial group admitting a Frobenius group of actions $FH$ with kernel $F$ and complement $H$ and $K$ be the kernel of $FH$ acts on $G$. If $C_{G} (F)= 1$, then $K<F$ and $FH/K$ is a Frobenius group with kernel $F/K$ and complement $HK/K$. \end{lemma}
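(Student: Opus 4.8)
The plan is to reduce the statement to two standard structural facts about Frobenius groups: (i) a normal subgroup of a Frobenius group that intersects the kernel trivially is itself trivial; and (ii) the quotient of a Frobenius group by a normal subgroup contained in its kernel is again a Frobenius group, whose kernel and complement are the images of the original kernel and complement. Both are elementary: for (i), a nonidentity element of such a normal subgroup would lie in some conjugate of the complement (hence would have trivial centralizer in the kernel) while, being in a normal subgroup, it would centralize the whole kernel, a contradiction; and (ii) is classical, using $\gcd(|F|,|H|)=1$ and the coprime-action formula $C_{F/N}(h)=C_F(h)N/N$.

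First I would prove $K<F$. Since $K$ is the kernel of the action of $FH$ on $G$, it is a normal subgroup of $FH$. Set $K_0=K\cap F$, a normal subgroup of $FH$ lying in $F$; by fact (ii) the quotient $FH/K_0$ is a Frobenius group with kernel $F/K_0$. The image $K/K_0$ is normal in $FH/K_0$ and meets the kernel $F/K_0$ trivially, so by fact (i) we get $K/K_0=1$, that is, $K=K\cap F\le F$. It remains to exclude $K=F$: if $F\le K$, then $F$ acts trivially on $G$, so $G=C_G(F)=1$, contradicting the hypothesis that $G$ is non-trivial. Hence $K<F$.

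For the second assertion, $K$ is now a normal subgroup of $FH$ contained in the Frobenius kernel $F$, so fact (ii) applies directly and yields that $FH/K$ is a Frobenius group with kernel $F/K$ and complement $HK/K$; note that $HK/K\cong H/(H\cap K)=H$ because $H\cap K\le H\cap F=1$, and that $F/K\ne 1$ by the first part, so this is a genuine Frobenius group. Concretely, the Frobenius condition one must verify is $C_{F/K}(hK)=1$ for every $h\in H\setminus\{1\}$, and this follows from $C_F(h)=1$ together with the coprime-action centralizer formula mentioned above.

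I do not anticipate any real difficulty: the content is the conjunction of textbook properties of Frobenius groups, and the only hypotheses actually used are that $FH$ is abstractly a Frobenius group and that $C_G(F)=1$ with $G\ne 1$ (the latter precisely to rule out $F\le K$). If a fully self-contained argument is wanted, facts (i) and (ii) can each be established in a few lines along the lines indicated above.
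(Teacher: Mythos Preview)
Your argument is correct, modulo one small ordering issue: when you invoke fact~(ii) for $FH/K_0$ you implicitly need $K_0\neq F$ (otherwise $F/K_0=1$ and the quotient is not a Frobenius group), but you only establish $F\nleq K$ afterwards. Simply move the observation ``if $F\le K$ then $G=C_G(F)=1$'' to the beginning and the logic is clean.

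The route differs slightly from the paper's. The paper first notes $F\nleq K$, then invokes Lemma~\ref{2.4} (faithful action of the complement whenever the kernel acts nontrivially) to get $H\cap K=1$, and from $(|F|,|H|)=1$ deduces $K\le F$ by a Hall-subgroup argument. You bypass Lemma~\ref{2.4} entirely, instead passing to $FH/(K\cap F)$ and using the structural fact that a normal subgroup of a Frobenius group meeting the kernel trivially must be trivial; this is essentially a direct proof of the classical dichotomy ``every normal subgroup of a Frobenius group either contains the kernel or is contained in it'', from which $K\le F$ (and incidentally $H\cap K=1$) follows at once. For the Frobenius structure of $FH/K$ both proofs use the same coprime-centralizer formula $C_{F/K}(h)=C_F(h)K/K=1$. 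Your approach is a touch more self-contained (no external Lemma~\ref{2.4}); the paper's is marginally shorter given that Lemma~\ref{2.4} is already available.
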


\begin{proof}
As $F\nleq K$, we have that $H \cap K = 1$ by Lemma \ref{2.4}. Hence $K\leq F$ because $(|F|,|H|)=1$. Then for every non-trivial element $h\in H$, since $(|F|,|\langle h \rangle|)=1$, $C_{F/K}(h)=C_F(h)K/K=1$. This implies that $FH/K$ is a Frobenius group with kernel $F/K$ and complement $HK/K$.
\end{proof}

Recall that for a soluble group $G$, the Fitting series starts with $F_0(G) = 1$, followed by the Fitting subgroup $F_1(G) = F (G)$, and $F_{i+1}(G)$ is defined as the inverse image of $F (G/F_i (G))$. The next lemma is a collection of \cite[Theorem 2.1 and Corollary 4.1]{K2} and \cite[Lemma 2.4 and Theorem 2.7]{KMS}.

\begin{lemma}\label{2.7} Suppose that a group $G$ admits a Frobenius group of automorphisms $FH$ with kernel $F$ and complement $H$ such that $C_{G}(F)=1$. Then:

\smallskip
\textup{(1)} $|G|=|C_G(H)|^{|H|}$.
\smallskip

\textup{(2)} $G=\langle C_G(H)^f\mid f\in F\rangle$.
\smallskip

\textup{(3)} If $C_{G}(H)$ is nilpotent, then $G$ is nilpotent.
\smallskip

\textup{(4)} $F_i(C_G(H))=F_i(G)\cap C_G(H)$.
\smallskip

\textup{(5)} $O_{\pi}(C_G(H))=O_{\pi}(G)\cap C_G(H)$ for any set of primes $\pi$.

\end{lemma}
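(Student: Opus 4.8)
The plan is to read Lemma~\ref{2.7} as a compilation and to treat its two ``easy'' items as consequences of the others. Parts (1), (4) and (5) I would simply quote: the order formula $|G|=|C_G(H)|^{|H|}$, the identity $F_i(C_G(H))=F_i(G)\cap C_G(H)$, and the identity $O_\pi(C_G(H))=O_\pi(G)\cap C_G(H)$ are established in \cite{KMS} and \cite{K2}, and all of the real depth --- the reduction to chief factors and the Brauer-character count behind (1), and the Fitting-height analysis behind (4)--(5) --- lives there; I would not reproduce any of it. That leaves (2) and (3), which I would deduce from the rest so as to keep the lemma self-contained given its first item.

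For (2), put $R=\langle C_G(H)^{f}\mid f\in F\rangle$. First one checks that $R$ is $FH$-invariant: $F$ permutes the generators by definition, and for $g\in C_G(H)$ and $h\in H$ we have $(g^{f})^{h}=g^{fh}=(g^{h})^{h^{-1}fh}=g^{h^{-1}fh}$, using $F\trianglelefteq FH$ and $g^{h}=g$, so $h$ carries a generator of $R$ to a generator of $R$. Since $C_{R}(F)\le C_{G}(F)=1$ and $C_{G}(H)\le R$, we get $C_{R}(H)=R\cap C_{G}(H)=C_{G}(H)$. If $R=1$, then $C_{G}(H)=1$ and (1) applied to $G$ forces $G=1=R$; otherwise $F$ acts nontrivially on $R$ (else $R=C_R(F)=1$), so by Lemma~\ref{2.5} the group $FH$ induces on $R$ a Frobenius group with kernel $F/K'$ and a complement of order $|H|$ whose fixed-point subgroup on $R$ is again $C_{R}(H)=C_{G}(H)$. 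Now (1) applied to $R$ gives $|R|=|C_{G}(H)|^{|H|}=|G|$, and $R\le G$ yields $R=G$.

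For (3), suppose $C_{G}(H)$ is nilpotent. By (4) with $i=1$, $C_{G}(H)=F_{1}(C_{G}(H))=F(G)\cap C_{G}(H)$, so $C_{G}(H)\le F(G)$. As $F(G)$ is characteristic in $G$ it is $F$-invariant, hence $C_{G}(H)^{f}\le F(G)$ for every $f\in F$; combined with (2) this gives $G=\langle C_{G}(H)^{f}\mid f\in F\rangle\le F(G)$, so $G=F(G)$ is nilpotent.

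As for difficulty: on the surface there is no obstacle, since everything substantial has been imported from \cite{K2,KMS}; the one point in the present argument that needs a little care is that $FH$ need not act faithfully on the subgroup $R$ appearing in (2), which is precisely the situation Lemma~\ref{2.5} is designed to handle.
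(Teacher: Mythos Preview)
The paper gives no argument for this lemma at all: the sentence preceding it simply attributes all five items to \cite[Theorem~2.1 and Corollary~4.1]{K2} and \cite[Lemma~2.4 and Theorem~2.7]{KMS}, with no further justification. Your proposal is therefore strictly more detailed than what the paper does. Your derivation of (2) from (1) is correct and is in fact the standard argument (and essentially how \cite{KMS} itself obtains the generation statement): the $FH$-invariance of $R$, the equality $C_R(H)=C_G(H)$, and the passage through Lemma~\ref{2.5} to apply (1) to $R$ are all handled properly. Your derivation of (3) from (2) and the case $i=1$ of (4) is also correct and pleasantly short.

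One caveat worth flagging: you are deducing (3) from (4), but in the literature the dependence may run the other way --- Khukhro's proof of the Fitting-series identity in \cite{K2} relies on the nilpotency theorem (3) of \cite{KMS}. This does not make your deduction wrong, since you are explicitly treating (4) as an externally established black box, exactly as the paper does for all five items; it just means your argument for (3) is a reorganisation rather than an independent proof, which is all the lemma requires.
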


In the following lemma, the symbols $\mathfrak{U}$ and $\mathfrak{A}(p-1)$ denote the class of all supersoluble groups and the class of all abelian groups of exponent dividing $p-1$, respectively. Also, a normal subgroup $N$ of $G$ is called $\mathfrak{U}$-hypercentral in $G$ if either $N=1$ or $N>1$ and all chief factors of $G$ below $N$ are cyclic. Let $Z_{\mathfrak{U}}(G)$ denote the $\mathfrak{U}$-hypercentre of $G$, that is, the product of all $\mathfrak{U}$-hypercentral normal subgroups of $G$.

\begin{lemma}\label{2.10} Let $E$ be a normal $p$-subgroup of a group $G$. Then $E\leq Z_{\mathfrak{U}}(G)$ if and only if $(G/C_G(E))^{\mathfrak{A}(p-1)}\leq O_p(G/C_G(E))$.\end{lemma}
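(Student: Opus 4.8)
The plan is to pass to $\bar G:=G/C_G(E)$ acting faithfully on $E$, and to use the standard fact that $E\leq Z_{\mathfrak{U}}(G)$ is equivalent to saying that every $G$-chief factor lying below $E$ has order $p$. Observe first that if $K\leq H\leq E$ and $H/K$ is a $G$-chief factor, then $C_G(E)$ acts trivially on $H/K$, so the $G$-action on $H/K$ factors through $\bar G$; thus $H/K$ may be viewed as an irreducible $\mathbb{F}_p\bar G$-module.

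For the direction ``$\Rightarrow$'', assume $E\leq Z_{\mathfrak{U}}(G)$ and fix a $G$-chief series $1=E_0<E_1<\cdots<E_n=E$ with $|E_i/E_{i-1}|=p$ for every $i$. Since $\mathrm{Aut}(C_p)$ is cyclic of order $p-1$, each $G/C_G(E_i/E_{i-1})$ embeds into $C_{p-1}$, hence $G/S\in\mathfrak{A}(p-1)$ where $S:=\bigcap_{i=1}^{n}C_G(E_i/E_{i-1})$. Each $C_G(E_i/E_{i-1})$ is normal in $G$ (as $E_i,E_{i-1}\trianglelefteq G$), so $S\trianglelefteq G$, and clearly $C_G(E)\leq S$. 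The quotient $S/C_G(E)$ stabilizes the chain $1=E_0<\cdots<E_n=E$ of the $p$-group $E$, so by the classical theorem on stability groups (a group of automorphisms of a $p$-group stabilizing a chain is itself a $p$-group) it is a $p$-group; being normal in $\bar G$, it lies in $O_p(\bar G)$. Finally $\bar G\big/(S/C_G(E))\cong G/S\in\mathfrak{A}(p-1)$, whence $(G/C_G(E))^{\mathfrak{A}(p-1)}\leq S/C_G(E)\leq O_p(G/C_G(E))$.

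For the direction ``$\Leftarrow$'', assume $\bar G^{\mathfrak{A}(p-1)}\leq O_p(\bar G)$ and let $H/K$ be any $G$-chief factor with $K<H\leq E$; put $V=H/K$, an irreducible $\mathbb{F}_p\bar G$-module. Since $O_p(\bar G)$ is a normal $p$-subgroup of $\bar G$, the fixed-point subspace $C_V(O_p(\bar G))$ is nonzero and $\bar G$-invariant, so it equals $V$ by irreducibility; thus $O_p(\bar G)$, and hence $\bar G^{\mathfrak{A}(p-1)}$, acts trivially on $V$. Therefore $\bar G$ acts on $V$ through the abelian quotient $\bar G\big/\bar G^{\mathfrak{A}(p-1)}$, which has exponent dividing $p-1$. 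Each element of this quotient acts on $V$ as an operator $T$ with $T^{p-1}=\mathrm{id}$, so its minimal polynomial divides $x^{p-1}-1=\prod_{c\in\mathbb{F}_p^{\times}}(x-c)$, a product of distinct linear factors over $\mathbb{F}_p$; hence each such operator is diagonalizable over $\mathbb{F}_p$, and since they pairwise commute they are simultaneously diagonalizable. Thus $V$ has a one-dimensional $\bar G$-submodule, so $\dim_{\mathbb{F}_p}V=1$ by irreducibility. As every $G$-chief factor below $E$ then has order $p$, we conclude $E\leq Z_{\mathfrak{U}}(G)$.

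The bookkeeping points are the normality of $C_G(E_i/E_{i-1})$ and $S$ in $G$ and the appeal to the stability-group theorem; the step that needs genuine care is the representation-theoretic argument in ``$\Leftarrow$'', i.e.\ that an abelian group of exponent dividing $p-1$ cannot act irreducibly on an $\mathbb{F}_p$-space of dimension $>1$ (equivalently, that its image in $\mathrm{GL}(V)$ consists of $\mathbb{F}_p$-scalars), together with the observation that the $G$-action on each chief factor below $E$ factors through $G/C_G(E)$. I expect these to be the main content, with everything else routine.
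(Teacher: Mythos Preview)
Your proof is correct. The overall strategy matches the paper's: for sufficiency, show that on each chief factor $H/K$ below $E$ the relevant normal $p$-subgroup acts trivially and then that an abelian group of exponent dividing $p-1$ can act irreducibly over $\mathbb{F}_p$ only in dimension one; for necessity, use the stability-group theorem to see that the intersection of the chief-factor centralizers sits, modulo $C_G(E)$, inside $O_p(G/C_G(E))$.

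The difference is one of packaging rather than substance. The paper outsources both directions to two lemmas of Skiba \cite{Ski}: necessity is simply \cite[Lemma~2.2]{Ski}, and for sufficiency the paper passes to $G/C_G(H/K)$ (rather than staying in $\bar G=G/C_G(E)$ as you do), observes that $O_p(G/C_G(H/K))=1$ for a chief $p$-factor, hence the hypothesis forces $G/C_G(H/K)\in\mathfrak{A}(p-1)$, and then invokes \cite[Lemma~2.1]{Ski} to get $|H/K|=p$. Your version unpacks exactly these lemmas---the stability-group argument and the simultaneous-diagonalization argument---so it is self-contained and slightly more elementary, at the cost of length; the paper's version is shorter but relies on the reader having Skiba's paper to hand.
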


\begin{proof}
The necessity directly follows from \cite[Lemma 2.2]{Ski}. Now we prove the sufficiency. Since $O_p(G/C_G(H/K))=1$ for any chief factor $H/K$ of $G$ below $E$ and $C_G(E)\leq C_G(H/K)$, $G/C_G(H/K)\leq \mathfrak{A}(p-1)$. Hence $|H/K|=p$ by \cite[Lemma 2.1]{Ski}. This shows that $E\leq Z_{\mathfrak{U}}(G)$.
\end{proof}

\section{Main Results}

Firstly, we begin to show the connection between the properties of $G$ and $C_G(H)$ by proving that $G$ is $p$-closed (resp. $p$-nilpotent) if $C_G(H)$ is $p$-closed (resp. $p$-nilpotent).

\begin{theorem}\label{T1}
Suppose that a group $G$ admits a Frobenius group of automorphisms $FH$ with kernel $F$ and complement $H$ such that $C_{G}(F)=1$. If $C_{G}(H)$ is $p$-closed, then $G$ is $p$-closed.
\end{theorem}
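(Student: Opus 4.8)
The plan is to reduce the statement to the quotient $\overline{G} := G/O_p(G)$ and to show that $\overline{G}$ is a $p'$-group, which is precisely the assertion that $O_p(G)$ is a Sylow $p$-subgroup of $G$, i.e. that $G$ is $p$-closed. The main engine will be the order formula of Lemma \ref{2.7}(1) together with the behaviour of fixed points under the characteristic quotient $G\mapsto G/O_p(G)$.

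First I would observe that $O_p(G)$, being characteristic in $G$, is $FH$-invariant, and that $C_{O_p(G)}(F)\le C_G(F)=1$. Hence Lemma \ref{2.2} gives $C_{\overline{G}}(F)=1$, and Lemma \ref{2.3} gives $C_{\overline{G}}(H)=C_G(H)O_p(G)/O_p(G)\cong C_G(H)/\bigl(C_G(H)\cap O_p(G)\bigr)$. The key identification is then $C_G(H)\cap O_p(G)=O_p(C_G(H))$, which is exactly Lemma \ref{2.7}(5) applied with $\pi=\{p\}$. Since $C_G(H)$ is $p$-closed by hypothesis, $O_p(C_G(H))$ is a Sylow $p$-subgroup of $C_G(H)$, so $C_{\overline{G}}(H)\cong C_G(H)/O_p(C_G(H))$ is a $p'$-group.

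Finally, I would note that $FH$ still acts on $\overline{G}$ as a Frobenius group of automorphisms (the induced action may have nontrivial kernel, but this is irrelevant: the hypothesis only asks that $FH$ itself be a Frobenius group, and $C_{\overline{G}}(F)=1$ has just been checked), so Lemma \ref{2.7}(1) applies to $\overline{G}$ and yields $|\overline{G}|=|C_{\overline{G}}(H)|^{|H|}$, a number coprime to $p$. Therefore $|G:O_p(G)|$ is coprime to $p$, i.e. $G$ is $p$-closed.

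I do not expect a genuine obstacle here: the result falls out directly from the preliminary lemmas and no induction on $|G|$ seems necessary. The only two points requiring a little care are (i) pinning down $C_G(H)\cap O_p(G)$ as $O_p(C_G(H))$ so as to conclude $C_{\overline{G}}(H)$ is a $p'$-group — for which Lemma \ref{2.7}(5) is essential — and (ii) remarking that the hypotheses of Lemma \ref{2.7} survive passage to $\overline{G}$ even though the induced action of $FH$ need not be faithful.
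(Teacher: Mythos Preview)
Your argument is correct and is genuinely different from the paper's. The paper proceeds by a minimal counterexample: it first shows $F(G)=O_q(G)$ for some prime $q\neq p$ with $G/O_q(G)$ $p$-closed, then reduces to $G=O_q(G)P$, and finally observes that $C_G(H)$, being both $p$-closed and $q$-closed in a $\{p,q\}$-group, is nilpotent, whence $G$ is nilpotent by Lemma~\ref{2.7}(3) --- a contradiction. Your route is more direct: you pass to $\overline{G}=G/O_p(G)$, use Lemma~\ref{2.7}(5) to identify $C_G(H)\cap O_p(G)=O_p(C_G(H))$ and hence conclude $C_{\overline{G}}(H)$ is a $p'$-group, and then invoke the order formula Lemma~\ref{2.7}(1) to force $\overline{G}$ itself to be a $p'$-group. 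The paper's argument leans on the nilpotency transfer (Lemma~\ref{2.7}(3)) and induction, whereas yours avoids induction entirely and uses only the order formula and the $O_\pi$-identification; this is shorter and conceptually cleaner.

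One cosmetic point: your remark in (ii) that non-faithfulness of the induced action ``is irrelevant'' because ``the hypothesis only asks that $FH$ itself be a Frobenius group'' is a little loose, since ``group of automorphisms'' is usually read as a faithful action. The clean way to phrase it is to note that if $\overline{G}=1$ there is nothing to prove, and otherwise Lemma~\ref{2.5} replaces $FH$ by the Frobenius group $FH/K$ (with $K\le F$ and complement isomorphic to $H$) acting faithfully on $\overline{G}$, so Lemma~\ref{2.7}(1) applies verbatim with the same $|H|$ and the same $C_{\overline{G}}(H)$. This does not affect the substance of your proof.
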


\begin{proof} Suppose that the theorem is not true. Then we may consider a counterexample $G$ of minimal order. We proceed the proof via the following steps:

\smallskip

(1) \textit{$F(G)=O_{q}(G)$, where $q$ is a prime such that $q\neq p$, and $G/O_q(G)$ is $p$-closed.}

\smallskip

By Lemma \ref{2.1}, $G$ is soluble, and so $F(G) \neq 1$. Let $q$ be any prime dividing $|F(G)|$. Then $G/O_{q}(G)$ is $FH$-invariant. In view of Lemmas \ref{2.2}, \ref{2.3} and \ref{2.5}, it is easy to see that the hypothesis of the theorem still holds for $G/O_{q}(G)$. By the minimality of our counterexample, we have that $G/O_{q}(G)$ is $p$-closed. If $q=p$, then $G$ is $p$-closed, a contradiction. Thus $q\neq p$. Now assume that there exists another prime $r$ dividing $|F(G)|$. Then with the same argument as above, $G/O_{r}(G)$ is $p$-closed, and consequently, $G$ is $p$-closed. This contradiction shows that $F(G)=O_{q}(G)$.

\smallskip

(2) \textit{$G=O_{q}(G)P$, where $P$ is a Sylow $p$-subgroup of $G$.}

\smallskip

By (1), $O_{q}(G)P$ is an $FH$-invariant normal subgroup of $G$. In view of Lemma \ref{2.5}, $O_{q}(G)P$ satisfies the hypothesis of the theorem. If $O_{q}(G)P<G$, then by the minimality of our counterexample, we have that $O_{q}(G)P$ is $p$-closed, and so $G$ is $p$-closed, a contradiction. Hence $G=O_{q}(G)P$.

\smallskip

(3) \textit{The final contradiction.}

\smallskip

Obviously, $G$ is $q$-closed by (2), and so $C_{G}(H)$ is nilpotent. Then by Lemma \ref{2.7}(3), $G$ is nilpotent. The final contradiction finishes the proof.
\end{proof}

\begin{theorem}\label{T5}
Suppose that a group $G$ admits a Frobenius group of automorphisms $FH$ with kernel $F$ and complement $H$ such that $C_{G}(F)=1$. If $C_{G}(H)$ is $p$-nilpotent, then $G$ is $p$-nilpotent.
\end{theorem}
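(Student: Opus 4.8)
The plan is to mimic closely the proof of Theorem \ref{T1}, arguing by induction on $|G|$ via a minimal counterexample $G$, and then pushing down to a situation where Lemma \ref{2.7}(3) applies. First I would note that $G$ is soluble by Lemma \ref{2.1}, so $F(G)\neq 1$. For any prime $q$ dividing $|F(G)|$, the quotient $G/O_q(G)$ is $FH$-invariant and, by Lemmas \ref{2.2}, \ref{2.3} and \ref{2.5}, inherits the hypothesis ($C_{G/O_q(G)}(H)=C_G(H)O_q(G)/O_q(G)$ is a quotient of the $p$-nilpotent group $C_G(H)$, hence $p$-nilpotent); so by minimality $G/O_q(G)$ is $p$-nilpotent. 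If $q=p$ this already forces $G$ to be $p$-nilpotent (an extension of a $p$-group by a $p$-nilpotent group is $p$-nilpotent), a contradiction; hence $p\nmid|F(G)|$. If two distinct primes $q,r$ divide $|F(G)|$, then $G$ embeds in $G/O_q(G)\times G/O_r(G)$, a product of $p'$-by-$p$-nilpotent, hence $p$-nilpotent, groups — so $G$ is $p$-nilpotent, a contradiction. Therefore $F(G)=O_q(G)$ for a single prime $q\neq p$.

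Next I would cut $G$ down to $G=O_q(G)P$ for a Sylow $p$-subgroup $P$: the subgroup $O_q(G)O^{p'}(G)$, or more simply any $FH$-invariant subgroup of the form $O_q(G)P$ with $P$ an $FH$-invariant Sylow $p$-subgroup (which exists by coprimality of $|FH|$ with $|G|$ and standard invariant-Sylow theory, or one can argue as in step (2) of Theorem \ref{T1}), satisfies the hypothesis by Lemma \ref{2.5}; if it is proper in $G$ then minimality gives it is $p$-nilpotent, whence $O_q(G)\leq O_{p'}(G)$ and $G$ is $p$-nilpotent. So $G=O_q(G)P$, and since $F(G)=O_q(G)$ we have $C_G(O_q(G))\leq O_q(G)$.

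The final step is where the argument differs from Theorem \ref{T1} and is the main obstacle. In Theorem \ref{T1} one concludes $G$ is $q$-closed and hence $C_G(H)$ nilpotent; here being $p$-nilpotent for $C_G(H)$ with $G=O_q(G)P$ means $C_G(H)=O_q(C_G(H))\times (P\cap C_G(H))$, and I want to deduce $C_G(H)$ is nilpotent so that Lemma \ref{2.7}(3) gives $G$ nilpotent — which, since $G=O_q(G)P$ with $q\neq p$, is certainly $p$-nilpotent, the contradiction. To get nilpotence of $C_G(H)$ it suffices to show $P\cap C_G(H)$ centralizes $O_q(C_G(H))$; equivalently, using Lemma \ref{2.7}(5), that $P\cap C_G(H)=O_p(G)\cap C_G(H)$ centralizes $O_q(G)\cap C_G(H)$. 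The cleanest route is to show $O_p(G)\neq 1$: if $O_p(G)\neq 1$ then since $q\neq p$ and $F(G)=O_q(G)$ we would need $O_p(G)\leq O_q(G)$, impossible, so in fact one must instead show directly that $C_G(H)$ acts trivially enough. Concretely, I would examine the action of $C_G(H)$ on $O_q(G)$: by Lemma \ref{2.7}(2), $G=\langle C_G(H)^f\mid f\in F\rangle$, and $O_q(G)$ together with the (conjugates of the) $p$-part must generate a group in which the $p$-part normalizes but cannot centralize $O_q(G)$ unless $G$ is $q$-closed-by-something small; I expect the right move is to pass to $\bar G=G/\Phi(O_q(G))$ (still satisfying the hypothesis by Lemmas \ref{2.2}–\ref{2.5}), so that $\bar O:=O_q(G)/\Phi(O_q(G))$ is an elementary abelian $p'$-group that is a faithful $G/O_q(G)$-module, decompose $\bar O$ into a direct sum of $P$-composition factors, and use $p$-nilpotence of $C_G(H)$ to see $C_{\bar O}(P\cap C_G(H))$ behaves like a trivial module; then Lemma \ref{2.10}-style hypercentrality arguments together with Lemma \ref{2.7}(4)--(5) force $[\bar O, P\cap C_G(H)]=1$, hence $C_{\bar G}(H)$ nilpotent, hence $\bar G$ nilpotent by Lemma \ref{2.7}(3), hence $[\bar O,P]=1$, contradicting faithfulness unless $P=1$, i.e. $G=O_q(G)$ is $p$-nilpotent. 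The delicate point — and the main obstacle — is precisely this transfer of "$C_G(H)$ is $p$-nilpotent" into "$P\cap C_G(H)$ centralizes $O_q(G)$"; everything else is the standard minimal-counterexample bookkeeping already rehearsed in Theorem \ref{T1}.
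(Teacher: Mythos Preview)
Your first reduction contains a genuine error that sends the whole argument in the wrong direction. You claim that ``an extension of a $p$-group by a $p$-nilpotent group is $p$-nilpotent'' and use this to rule out $q=p$. This is false: take $G=S_4$, $p=2$, $N=V_4$; then $G/N\cong S_3$ is $2$-nilpotent but $S_4$ is not. The closure property that actually holds runs the other way: an extension of a $p'$-group by a $p$-nilpotent group \emph{is} $p$-nilpotent (the preimage of the normal $p$-complement of the quotient is a normal $p$-complement of $G$). Consequently, for any prime $q\neq p$ dividing $|F(G)|$, the fact that $G/O_q(G)$ is $p$-nilpotent already forces $G$ to be $p$-nilpotent, a contradiction. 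Hence every prime dividing $|F(G)|$ equals $p$, i.e.\ $F(G)=O_p(G)$ --- the opposite of your conclusion $F(G)=O_q(G)$ with $q\neq p$.

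This reversed orientation is exactly what the paper uses: after $O_{p'}(G)=1$ and $F(G)=O_p(G)$, one takes an $FH$-invariant Sylow $q$-subgroup $Q$ for some $q\neq p$ and reduces to $G=O_p(G)Q$ (if $O_p(G)Q<G$, minimality makes it $p$-nilpotent, hence nilpotent, hence $Q\leq C_G(F(G))\leq F(G)=O_p(G)$, absurd). Now $G$ is a $p$-closed $\{p,q\}$-group, so $C_G(H)$ is simultaneously $p$-closed and $p$-nilpotent in a $\{p,q\}$-group, hence the direct product of its Sylow $p$- and $q$-subgroups, hence nilpotent; Lemma~\ref{2.7}(3) finishes. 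Your ``main obstacle'' --- transferring $p$-nilpotence of $C_G(H)$ into $P\cap C_G(H)$ centralizing $O_q(G)$ --- is entirely an artefact of having landed on $F(G)=O_q(G)$; with the correct reduction no $\Phi$-quotient or module argument is needed. (Incidentally, your step (2) is also broken as written: $O_q(G)P$ being $p$-nilpotent only says $O_q(G)$ is its normal $p$-complement, which you already knew, and does not imply $G$ is $p$-nilpotent.)
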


\begin{proof}
Suppose that the theorem does not hold. Let $G$ be a counterexample of minimal order. Then:

\smallskip

(1) \textit{$O_{p'}(G)= 1$, and so $F(G)=O_p(G)$}.

\smallskip

By Lemmas \ref{2.2}, \ref{2.3} and \ref{2.5}, $G/O_{p'}(G)$ satisfies the hypothesis of the theorem. If $O_{p'}(G)\neq 1$, then by our choice, $G/O_{p'}(G)$ is $p$-nilpotent, and thereby $G$ is $p$-nilpotent, a contradiction. Thus $O_{p'}(G)=1$, and so $F(G)=O_p(G)$.

\smallskip

(2) \textit{$G=O_{p}(G)Q$, where $Q$ is the unique $FH$-invariant Sylow $q$-subgroup of $G$ with $q\neq p$}.

\smallskip

By \cite[Lemma 2.6]{KMS}, there exists a unique $FH$-invariant Sylow $q$-subgroup of $G$, denoted by $Q$. If $O_p(G)Q<G$, then $O_p(G)Q$ satisfies the hypothesis of the theorem by Lemma \ref{2.5}, and so $O_p(G)Q$ is $p$-nilpotent by the minimality of our counterexample. This yields that $O_p(G)Q$ is nilpotent. Then $Q\leq C_G(O_p(G))$. Since $G$ is soluble by Lemma \ref{2.1}, $C_G(F(G))\leq F(G)$. This implies that $Q\leq F(G)$, which contradicts (1). Hence $G=O_{p}(G)Q$.

\smallskip

(3) \textit{The final contradiction.}

\smallskip

Since $C_G(H)$ is $p$-nilpotent and $G$ is $p$-closed by (2), we have that $C_G(H)$ is nilpotent, which forces that $G$ is also nilpotent by Lemma \ref{2.7}(3). This is the final contradiction.
\end{proof}

The following lemma can be viewed as not only an improvement of \cite[Lemma 2.6]{KMS}, but also a key step in the proof of Theorem \ref{T3}.

\begin{lemma}\label{2.6} Suppose that a group $G$ admits a Frobenius group of automorphisms with kernel $F$ and complement $H$ such that $C_{G} (F)= 1$. Then for any subset of primes $\pi$ of $\pi (G)$, there exists a unique $FH$-invariant Hall $\pi$-subgroup of $G$. Furthermore, the set of all unique $FH$-invariant Hall subgroups forms a Hall system of $G$. \end{lemma}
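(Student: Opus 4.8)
The plan is to reduce everything to the corresponding statement for Sylow subgroups, namely the existence and uniqueness of an $FH$-invariant Sylow $p$-subgroup for each prime $p$ (\cite[Lemma 2.6]{KMS}), and then to assemble the Hall subgroups out of these. Two preliminary observations are needed. First, $G$ is soluble by Lemma \ref{2.1}, so $G$ and all of its subgroups possess Hall $\pi$-subgroups, and the cases $\pi\cap\pi(G)=\varnothing$ (take the trivial subgroup) and $\pi(G)\subseteq\pi$ (take $G$) are trivial, so we may assume $\varnothing\neq\pi\cap\pi(G)\subsetneq\pi(G)$. Second, by Lemma \ref{2.5}, every nontrivial $FH$-invariant subgroup $K$ of $G$ again admits a Frobenius group of automorphisms with fixed-point-free kernel (namely $FH$ modulo the kernel of its action on $K$), since $C_K(F)\leq C_G(F)=1$; this is exactly what permits the inductive hypothesis to be applied to $FH$-invariant subgroups of $G$.

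The core is an induction on the pair $(|G|,|\pi\cap\pi(G)|)$ ordered lexicographically. Put $\sigma=\pi\cap\pi(G)$; by the above we may assume $|\sigma|\geq 2$ (the case $|\sigma|=1$ being \cite[Lemma 2.6]{KMS}), fix $p\in\sigma$, and set $\tau=\sigma\setminus\{p\}$. By the induction hypothesis (same $G$, strictly smaller $|\sigma|$) there is a unique $FH$-invariant Hall $\tau$-subgroup $G_\tau$, and by \cite[Lemma 2.6]{KMS} a unique $FH$-invariant Sylow $p$-subgroup $P_p$; put $K=\langle G_\tau,P_p\rangle$, an $FH$-invariant subgroup of $G$. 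If $K=G$ then $\pi(G)\subseteq\tau\cup\{p\}\subseteq\pi$, contrary to our assumption, so $K<G$ and the induction hypothesis applies to $K$. Since $G_\tau\leq K$ is a $\tau$-subgroup whose order equals the $\tau$-part of $|G|$, it is a Hall $\tau$-subgroup of $K$, hence \emph{the} unique $FH$-invariant one; likewise $P_p$ is the unique $FH$-invariant Sylow $p$-subgroup of $K$. Consequently the unique $FH$-invariant Hall $\sigma$-subgroup of $K$ contains both $G_\tau$ and $P_p$, hence contains $K$, hence equals $K$; so $K$ is a $\sigma$-group, and comparing orders $G_\tau P_p=K$ is a subgroup of order equal to the $\sigma$-part of $|G|$, i.e. an $FH$-invariant Hall $\pi$-subgroup of $G$. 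For uniqueness, if $D$ is any $FH$-invariant Hall $\pi$-subgroup of $G$ then $D<G$, and applying the induction hypothesis to $D$ the unique $FH$-invariant Hall $\tau$-subgroup of $D$ and the unique $FH$-invariant Sylow $p$-subgroup of $D$ are, by order comparison and by uniqueness in $G$, equal to $G_\tau$ and $P_p$; hence $K=\langle G_\tau,P_p\rangle\leq D$, and $|D|=|K|$ forces $D=K$.

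Finally, taking $\tau=\{q\}$ in the argument above shows that for any two distinct primes $p,q\in\pi(G)$ the unique $FH$-invariant Sylow subgroups satisfy $P_pP_q=P_qP_p$; thus $\{P_p:p\in\pi(G)\}$ is a Sylow basis of $G$, and unwinding the relation $G_\sigma=G_\tau P_p$ identifies the unique $FH$-invariant Hall $\pi$-subgroup with $\prod_{p\in\pi\cap\pi(G)}P_p$. Hence the family of all unique $FH$-invariant Hall subgroups is precisely the Hall system of $G$ determined by this Sylow basis, which is the final assertion. The part I expect to require the most care is not any single deep point but the bookkeeping in the double induction together with the repeated order comparisons that transfer the \emph{uniqueness} of $FH$-invariant Hall and Sylow subgroups between $G$ and its subgroups $K$ and $D$ (each such transfer quietly relies on Lemma \ref{2.5} to keep the Frobenius-automorphism hypothesis intact); all of the genuine difficulty stemming from the fixed-point-free action of $F$ — the solubility of $G$ and the existence and uniqueness of $FH$-invariant Sylow subgroups — is already supplied by Lemma \ref{2.1} and \cite[Lemma 2.6]{KMS}.
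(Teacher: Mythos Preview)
Your inductive approach is quite different from the paper's, and it contains a genuine gap. The problematic step is the assertion ``If $K=G$ then $\pi(G)\subseteq\tau\cup\{p\}\subseteq\pi$''. Here $K=\langle G_\tau,P_p\rangle$ is generated by a $\tau$-group and a $p$-group, but the subgroup they generate need not be a $(\tau\cup\{p\})$-group: in a soluble group a Hall $\tau$-subgroup and a Sylow $p$-subgroup that do not permute can generate a subgroup involving further primes (for instance, in $C_{11}\rtimes C_{10}$ one can choose a Sylow $2$-subgroup and a Sylow $5$-subgroup lying in different complements that together generate the whole group). You are implicitly assuming that $G_\tau$ and $P_p$ permute, which is essentially the conclusion you are aiming for. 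Since the rest of the existence argument requires $K<G$ to invoke the inductive hypothesis on $K$, the argument breaks here; the same issue recurs in the final paragraph when you invoke the $|\sigma|=2$ case to deduce $P_pP_q=P_qP_p$.

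The paper sidesteps this entirely by working inside the semidirect product $GF$ and using Carter subgroup and system normalizer theory. Since $C_G(F)=1$ and $F$ is nilpotent, $F$ is self-normalizing in $GF$, hence a Carter subgroup; it therefore contains, and (being nilpotent) equals, a system normalizer of $GF$, so $F$ normalizes a full Hall system of $G$. This yields an $F$-invariant Hall $\pi$-subgroup for every $\pi$ simultaneously; uniqueness follows from conjugacy of Carter subgroups inside $N_{GF}(S)$ together with $N_G(F)=C_G(F)=1$, and $H$-invariance is then automatic because $H$ normalizes $F$. To salvage your inductive scheme you would need an independent reason why the unique $FH$-invariant Sylow subgroups pairwise permute, and the cleanest such reason is precisely this system-normalizer argument.
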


\begin{proof} By Lemma \ref{2.1}, $G$ is soluble, and so is $GF$. Since $C_{G}(F) =1$, it is easy to see that $F$ is a Carter subgroup of $GF$. Then $F$ contains a system normalizer of $GF$ by \cite[Chapter V, Theorem 4.1]{Doe}. By \cite[Chapter I, Theorem 5.6]{Doe}, a system normalizer covers all central chief factors of $GF$, and so $F$ is a system normalizer of $GF$ because $F$ is nilpotent. This implies that there exists an $F$-invariant Hall $\pi$-subgroup $S$ of $G$. If $S$ and $S^g$ are both $F$-invariant Hall $\pi$-subgroups of $G$, where $g\in G$, then $F$ and $F^{g^{-1}}$ are two Carter subgroups of $N_{GF}(S)=N_{G}(S)F$. Hence $F=F^{g^{-1}g'}$ for some $g'\in N_{G}(S)$. Since $N_{G}(F)= C_{G}(F)= 1$, $g^{-1}g'=1$, and so $g\in N_{G}(S)$. Thus $S=S^{g}$. This shows that $S$ is the unique $F$-invariant Hall $\pi$-subgroup of $G$. Since $S^h$ is $F$-invariant for any $h\in H$, we have that $S$ is also $H$-invariant. Furthermore, let $\pi(G)=\{p_1,\dots,p_r\}$ and $S_i$ be the unique $FH$-invariant Sylow $p_i$-complement of $G$ for $1\leq i\leq r$. Note that for every subset of primes $\pi$ of $\pi (G)$, $G_\pi=\bigcap\{S_i:p_i\in \pi(G)\backslash \pi\}$ is the unique $FH$-invariant Hall $\pi$-subgroup of $G$. Then by \cite[Chapter I, Proposition 4.4]{Doe}, the set of all unique $FH$-invariant Hall subgroups forms a Hall system of $G$.\end{proof}

Now we can establish our main result as follows.

\begin{theorem}\label{T3} Suppose that a group $G$ admits a Frobenius group of automorphisms $FH$ with kernel $F$ and complement $H$ such that $C_{G}(F) = 1$. If $C_{G}(H)$ is $p$-supersoluble and $C_{G'}(H)$ is $p$-nilpotent, then $G$ is $p$-supersoluble.
\end{theorem}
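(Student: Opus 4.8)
The plan is to run a minimal-counterexample argument in the spirit of Theorems \ref{T1} and \ref{T5}, using the already-established $p$-nilpotent and $p$-closed transfer results together with the Fitting-subgroup machinery of Lemma \ref{2.7} and the unique $FH$-invariant Hall subgroups of Lemma \ref{2.6}. Let $G$ be a counterexample of minimal order. First I would reduce the structure of $G$: by Lemma \ref{2.1} $G$ is soluble, and using Lemmas \ref{2.2}, \ref{2.3}, \ref{2.5} one checks that quotients $G/N$ by $FH$-invariant normal subgroups inherit the hypotheses (note $C_G(H)N/N = C_{G/N}(H)$ is $p$-supersoluble as a quotient of $C_G(H)$, and $(G/N)' = G'N/N$ so its $H$-fixed points form a quotient of $C_{G'}(H)$, hence $p$-nilpotent); likewise $FH$-invariant normal subgroups inherit the hypotheses, so by minimality every proper $FH$-invariant section of $G$ is $p$-supersoluble. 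Standard arguments then force $O_{p'}(G)=1$, so $F(G)=O_p(G)=:P$, and by solubility $C_G(P)\le P$. I would also want to see that $G'<G$: if $G'=G$ then $C_{G'}(H)=C_G(H)$ would be simultaneously $p$-supersoluble and $p$-nilpotent, hence (being also the fixed points in a perfect group) one should be able to force $C_G(H)$ nilpotent and then $G$ nilpotent by Lemma \ref{2.7}(3), a contradiction — I would need to pin this step down carefully. Thus $G'$ is a proper $FH$-invariant normal subgroup, so $G'$ is $p$-supersoluble; combined with $C_{G'}(H)$ being $p$-nilpotent, Theorem \ref{T5} applied to $G'$ (with its induced Frobenius action, valid because $C_{G'}(F)=1$ by Lemma \ref{2.2}-type reasoning — actually $C_{G'}(F)\le C_G(F)=1$ directly) gives that $G'$ is $p$-nilpotent; since $O_{p'}(G')\le O_{p'}(G)=1$, we get that $G'$ is actually a $p$-group, i.e. $G'\le P$.

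Now $G/P=G/G'$ would be abelian only if $G'=P$; in general $G'\le P$ gives $G/G'\to$ but we really have $G'\le O_p(G)=P$, so $G/P$ is abelian, and $G=PQ_1\cdots$ with $G/P$ an abelian $p'$-by-($p$-part); more precisely $G/P$ has a normal Hall $p'$-subgroup? Not immediately — but $G/P$ abelian does give $G$ $p$-supersoluble provided every $G$-chief factor inside $P$ is cyclic, i.e. provided $P\le Z_{\mathfrak U}(G)$. So the crux becomes: show $P=O_p(G)\le Z_{\mathfrak U}(G)$. For this I would use Lemma \ref{2.10}: it suffices to prove $(G/C_G(P))^{\mathfrak A(p-1)}\le O_p(G/C_G(P))$. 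Here Lemma \ref{2.7}(4),(5) and the hypothesis that $C_G(H)$ is $p$-supersoluble are the key: from $p$-supersolubility of $C_G(H)$ one knows each $C_G(H)$-chief factor inside $O_p(C_G(H))=O_p(G)\cap C_G(H)$ is cyclic, hence $C_G(H)$ acts on $O_p(G)\cap C_G(H)$ with $\big(C_G(H)/C_{C_G(H)}(O_p(G)\cap C_G(H))\big)^{\mathfrak A(p-1)}$ a $p$-group. The task is to promote this to $G$, using $G=\langle C_G(H)^f : f\in F\rangle$ from Lemma \ref{2.7}(2) and the fact that $F$ normalizes the situation; one wants the $FH$-invariant Hall $p'$-subgroup $K$ of $G$ (Lemma \ref{2.6}) to centralize $P$ modulo an action of exponent dividing $p-1$, i.e. $[P,K,\ldots]$ splits into $F$-orbits of cyclic $C_G(H)$-factors. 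Concretely I would argue that $P$ is an $FH$-module with $C_P(F)=1$, decompose via Clifford/Khukhro-type orbit arguments over $F$, and match each piece to a chief factor of $C_G(H)$ in $C_P(H)$ which is cyclic by $p$-supersolubility.

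The main obstacle I anticipate is exactly this last transfer of cyclicity of chief factors from $C_G(H)$ up to $G$: the hypothesis only controls chief factors of $C_G(H)$, and the generation $G=\langle C_G(H)^f : f\in F\rangle$ is not by itself enough to conclude that $G$-chief factors in $P$ are cyclic without a module-theoretic argument respecting the $FH$-action. I expect one needs a lemma saying that if a $p$-group $P$ admits $FH$ with $C_P(F)=1$ and $P\rtimes C_G(H)$ (or rather the relevant section) has all its $C_G(H)$-chief factors in $P$ cyclic, then $\big(G/C_G(P)\big)^{\mathfrak A(p-1)}$ is a $p$-group — plausibly proved by passing to $P/\Phi(P)$, viewing it as a module over $\mathrm{GF}(p)[FH]$, using coprimality to get $F$-orbit decompositions, and invoking Lemma \ref{2.7}(2),(5) to identify the $H$-components. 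Once $P\le Z_{\mathfrak U}(G)$ is established, $G/P$ being abelian (indeed $G/G'$ with $G'\le P$) makes every remaining chief factor cyclic, so $G$ is $p$-supersoluble, contradicting the choice of $G$ and completing the proof.
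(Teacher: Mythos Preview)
Your reduction steps match the paper's proof closely: minimal counterexample, inheritance to $FH$-invariant sections, $O_{p'}(G)=1$, $F(G)=O_p(G)=:P$ with $C_G(P)=P$, then Theorem~\ref{T5} applied to $G'$ to get $G'$ $p$-nilpotent and hence $G'\le P$, so $G/P$ is abelian. (Your detour worrying whether $G'<G$ is unnecessary: Theorem~\ref{T5} applies to $G'$ regardless, since $C_{G'}(F)\le C_G(F)=1$; if $G'=G$ you would get $G$ $p$-nilpotent with $O_{p'}(G)=1$, hence $G$ a $p$-group, a contradiction.) The paper also inserts the observation, via Theorem~\ref{T1}, that $G$ is $p$-closed, so $P$ is the full Sylow $p$-subgroup and $G=PT$ with $T$ the unique $FH$-invariant Hall $p'$-subgroup from Lemma~\ref{2.6}; since $G/P$ is abelian, $T$ is abelian.

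Where you diverge --- and where the genuine gap lies --- is the final step, showing $P\le Z_{\mathfrak U}(G)$. You propose a module-theoretic argument with Clifford/$F$-orbit decompositions to transfer cyclicity of chief factors from $C_G(H)$ to $G$, and you flag this as the main obstacle. The paper bypasses all of that with an elementary exponent trick. Because $T$ is \emph{abelian}, the generation statement $T=\langle C_T(H)^f : f\in F\rangle$ from Lemma~\ref{2.7}(2) immediately gives $\exp(T)=\exp(C_T(H))$: conjugates have the same order, and in an abelian group the exponent of a generating set is the exponent of the whole group. On the other hand, setting $C=P\cap C_G(H)$, Lemma~\ref{2.7}(4) gives $F(C_G(H))=C$, so $C_{C_G(H)}(C)=C$; since $C_G(H)$ is $p$-supersoluble, $C\le Z_{\mathfrak U}(C_G(H))$, and Lemma~\ref{2.10} yields $C_G(H)/C\in\mathfrak A(p-1)$. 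Thus $\exp(C_T(H))\mid p-1$, hence $\exp(T)\mid p-1$, so $T\in\mathfrak A(p-1)$. Then $G/C_G(P)=G/P\cong T\in\mathfrak A(p-1)$, and Lemma~\ref{2.10} gives $P\le Z_{\mathfrak U}(G)$, the desired contradiction. No representation theory is needed; the abelianness of $T$ (coming from $G'\le P$) is exactly what makes the exponent transfer from $C_T(H)$ to $T$ trivial.
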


\begin{proof} Suppose that the theorem is not true. Then we may consider a counterexample $G$ of minimal order. Then:

\smallskip

(1) \textit{Let $S$ be an $FH$-invariant proper subgroup of $G$ and $N$ be a non-trivial $FH$-invariant normal subgroup of $G$. Then $S$ and $G/N$ are both $p$-supersoluble.}

\smallskip

By the minimality of our counterexample, it is sufficient to prove that $S$ and $G/N$ both satisfy the hypothesis of the theorem. Clearly, $S$ satisfies the hypothesis of the theorem by Lemma \ref{2.5}. Let $C_{G'N/N}(H)=A/N$ and $C_{G'/G'\cap N}(H)=B/G'\cap N$. Note that by Lemma \ref{2.3}, $A/N=BN/N=C_{G'}(H)N/N$ is $p$-nilpotent and $C_{G/N}(H)=C_G(H)N/N$ is $p$-supersoluble.
Hence by Lemmas \ref{2.2} and \ref{2.5}, $G/N$ also satisfies the hypothesis of the theorem.
\smallskip

(2) \textit{$O_{p'}(G)=O_{p'}(C_G(H))=1$.}

\smallskip

Suppose that $O_{p'}(G)\neq 1$. Then by (1), $G/O_{p'}(G)$ is $p$-supersoluble. Thus $G$ is $p$-supersoluble. This contradiction shows that $O_{p'}(G)=1$, which forces that $O_{p'}(C_G(H))=1$ by Lemma \ref{2.7}(5).
\smallskip

(3) \textit{$F_p(G)=F(G)=P$ is the socle of $G$, where $F_p(G)$ denotes the largest normal $p$-nilpotent subgroup of $G$ and $P$ denotes the normal Sylow $p$-subgroup of $G$, and $C_G(P)=P$.}

\smallskip

Since $C_G(H)$ is $p$-supersoluble and $O_{p'}(C_G(H))=1$ by (2), $C_G(H)$ is $p$-closed by \cite[Lemma 2.1.6]{Bal}. It follows from Theorem \ref{T1} that $G$ is $p$-closed. As $O_{p'}(G)=1$ by (2), $F_p(G)=F(G)=P$, where $P$ is the normal Sylow $p$-subgroup of $G$. If $\Phi(G)\neq 1$, then $G/\Phi(G)$ is $p$-supersoluble by (1). This implies that $G$ is $p$-supersoluble, which is impossible. Thus $\Phi(G)=1$, and so $P$ is the socle of $G$. Note that by Lemma \ref{2.1}, $G$ is soluble. It follows that $C_G(P)\leq P$. Therefore, we have that $C_G(P)=P$.

\smallskip

(4) \textit{$G$ has the unique $FH$-invariant Hall $p'$-subgroup $T$ and $T$ is abelian.}

\smallskip

By Lemma \ref{2.5}, $G'$ satisfies the hypothesis of Theorem \ref{T5}. Thus $G'$ is $p$-nilpotent, and so $G'\leq P$ by (3). This implies that $G/P$ is abelian. In view of Lemma \ref{2.6}, $G$ has the unique $FH$-invariant Hall $p'$-subgroup $T$, and clearly, $T$ is abelian.

\smallskip

(5) \textit{The exponent of $C_{T}(H)$ divides $p-1$.}

\smallskip

Let $C=P\cap C_G(H)$. Then obviously, $C$ is the normal Sylow $p$-subgroup of $C_G(H)$. By (3) and Lemma \ref{2.7}(4), $F(C_G(H))=C$, and so $C_{C_G(H)}(C)=C$. Since $C_G(H)$ is $p$-supersoluble, $C\leq Z_{\mathfrak{U}}(C_G(H))$, and consequently, $C_G(H)/C=C_G(H)/C_{C_G(H)}(C)\in \mathfrak{A}(p-1)$ by Lemma \ref{2.10}. This implies that the exponent of $C_{T}(H)$ divides $p-1$.

\smallskip

(6) \textit{The final contradiction.}

\smallskip

By applying Lemmas \ref{2.5} and \ref{2.7}(2) for $T$, we have that $T = \langle C_{T}(H)^{f}| f \in F\rangle$. Since $T$ is abelian by (4), the exponent of $C_{T}(H)$ is equal to the exponent of $T$. Then by (5), the exponent of $T$ divides $p-1$, and so $T\in \mathfrak{A}(p-1)$ by (4). Hence by (3), $G/C_G(P)=G/P\cong T \in \mathfrak{A}(p-1)$, which yields that $P\leq Z_{\mathfrak{U}}(G)$ by Lemma \ref{2.10}. Thus $G$ is $p$-supersoluble. The final contradiction finishes the proof.
\end{proof}

From Theorem \ref{T3}, we can directly deduce the next corollary.

\begin{corollary}
Suppose that a group $G$ admits a Frobenius group of automorphisms $FH$ with kernel $F$ and complement $H$ such that $C_{G}(F) = 1$. If $C_{G'}(H)$ is nilpotent and $C_{G}(H)$ is supersoluble, then $G$ is supersoluble.
\end{corollary}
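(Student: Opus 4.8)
The plan is to obtain the corollary as an immediate pointwise-in-$p$ consequence of Theorem \ref{T3}. I would start from the elementary and well-known fact that a finite group is supersoluble if and only if it is $p$-supersoluble for every prime $p$ (see, e.g., \cite{Doe}); likewise every nilpotent group is $p$-nilpotent for each prime $p$, and every supersoluble group is $p$-supersoluble for each prime $p$. So fix an arbitrary prime $p$. Since $C_{G}(H)$ is supersoluble it is in particular $p$-supersoluble, and since $C_{G'}(H)$ is nilpotent it is in particular $p$-nilpotent. Hence the hypotheses of Theorem \ref{T3} are satisfied for this $p$, and that theorem gives that $G$ is $p$-supersoluble.

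As the prime $p$ was arbitrary, $G$ is $p$-supersoluble for every prime $p$, and therefore $G$ is supersoluble, which is the assertion.

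There is essentially no obstacle to overcome here: the corollary is exactly Theorem \ref{T3} read simultaneously for all primes, the only additional ingredient being the standard equivalence between supersolubility and $p$-supersolubility for all $p$. If one preferred to avoid quoting that equivalence, one could instead run through a chief series of $G$ and observe that a chief factor of $p$-power order is cyclic by the $p$-supersolubility obtained for that $p$; but the one-line reduction above is the most economical route, and it is the one I expect the authors to take.
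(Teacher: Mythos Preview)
Your proposal is correct and matches the paper's approach exactly: the authors present the corollary with the remark that it follows directly from Theorem~\ref{T3}, which is precisely the pointwise-in-$p$ reduction you describe.
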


Recall that if $\sigma$ denotes a linear ordering on $\mathbb{P}$, then a group $G$ is called a Sylow tower group of type $\sigma$ if there exists a series of normal subgroups of $G$: $1=G_0\leq G_1\leq\cdots\leq G_n=G$ such that $G_i/G_{i-1}$ is a Sylow $p_i$-subgroup of $G/G_{i-1}$ for $1\leq i\leq n$, where $p_1\prec p_2\prec\cdots\prec p_n$ is the ordering induced by $\sigma$ on the distinct prime divisors of $|G|$. Here we arrive at the next theorem.

\begin{theorem}\label{T2} Suppose that a group $G$ admits a Frobenius group of automorphisms $FH$ with kernel $F$ and complement $H$ such that $C_{G}(F)= 1$. If $C_{G}(H)$ is a Sylow tower group of a certain type, then $G$ is a Sylow tower group of the same type.
\end{theorem}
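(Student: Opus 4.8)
The plan is to mimic the structure of the proofs of Theorems \ref{T1}, \ref{T5} and \ref{T3}: take a minimal counterexample $G$ and reduce to a situation where the known tools (solubility, unique $FH$-invariant Hall subgroups, the order/generation formulas of Lemma \ref{2.7}) force a contradiction. Fix the ordering $\sigma$, and write the distinct prime divisors of $|G|$ as $p_1 \prec p_2 \prec \cdots \prec p_n$. The first observation I would record is that being a Sylow tower group of type $\sigma$ is equivalent to the assertion that for the \emph{largest} prime $p_1$ in the ordering, $G$ has a normal Sylow $p_1$-subgroup, and $G/O_{p_1}(G)$ is again a Sylow tower group of type $\sigma$ (on the remaining primes). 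So the heart of the matter is to show that $G$ has a normal Sylow $p_1$-subgroup, where $p_1$ is the $\sigma$-first prime dividing $|G|$.

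First I would pass to quotients: for any non-trivial $FH$-invariant normal subgroup $N$ of $G$, Lemmas \ref{2.2}, \ref{2.3} and \ref{2.5} show $G/N$ satisfies the hypothesis (here one uses Lemma \ref{2.7}(1) to see $\pi(G/N)$ behaves correctly, and Lemma \ref{2.3} to identify $C_{G/N}(H)$ as an epimorphic image of $C_G(H)$, hence still a Sylow tower group of type $\sigma$); likewise every $FH$-invariant proper subgroup $S$ satisfies the hypothesis by Lemma \ref{2.5}. Thus by minimality $G/N$ and $S$ are Sylow tower groups of type $\sigma$. Next, $G$ is soluble by Lemma \ref{2.1}, so $F(G)\neq 1$; let $q$ be a prime with $O_q(G)\neq 1$. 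Then $G/O_q(G)$ is a Sylow tower group of type $\sigma$. If $q=p_1$ this already gives, via the equivalence above (combined with $O_{p_1}(G)$ being normal), that $G$ itself is a Sylow tower group of type $\sigma$ — a contradiction. Hence $O_{p_1}(G)=1$, and more generally $F(G)$ is a $p'$-group for $p=p_1$. One can also kill $\Phi(G)$ in the usual way: if $\Phi(G)\neq 1$ then $G/\Phi(G)$ is a Sylow tower group of type $\sigma$ and, since having a normal Sylow $p_1$-subgroup is inherited from $G/\Phi(G)$ back to $G$ (the preimage of $O_{p_1}(G/\Phi(G))$ is normal, $p_1$-nilpotent actually a $p_1$-group as $\Phi(G)$ is nilpotent with no $p_1$-part, wait — here one must be slightly careful, so instead: $G/\Phi(G)$ being a Sylow tower group and $\Phi(G)$ nilpotent forces $G$ to be one too by a standard argument), a contradiction; so $\Phi(G)=1$.

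With $O_{p_1}(G)=1$ and the reductions in place, I would now use Lemma \ref{2.6}: $G$ has a unique $FH$-invariant Hall $p_1'$-subgroup $K$, and also a unique $FH$-invariant Sylow $p_1$-subgroup $P$, and these fit into a common Hall system. Consider $M=O_q(G)$ for $q=p_n$ the $\sigma$-\emph{last} prime (or more generally work with $F(G)$, which is a $p_1'$-group): $M\leq K$. The subgroup $PM$ — better, take the product of $P$ with the $FH$-invariant Hall $\{p_1,\dots\}$-part as needed — and I would argue that since $G/M$ is a Sylow tower group of type $\sigma$ and its $\sigma$-first prime is still $p_1$, $G/M$ has a normal Sylow $p_1$-subgroup $PM/M$, so $PM\trianglelefteq G$; being a proper $FH$-invariant subgroup (it is proper because $M<K$, as $F(G)$ is a proper $p_1'$-Hall unless $G$ is a $\{p_1,q\}$-group, the genuinely hard residual case), $PM$ is a Sylow tower group of type $\sigma$, forcing $P\trianglelefteq PM$ hence $P$ characteristic in the normal subgroup $PM$, hence $P\trianglelefteq G$ — contradiction. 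The main obstacle, exactly as in Theorems \ref{T1} and \ref{T5}, is the residual configuration $G=P Q$ with $P=O_{p_1}$? no — with $P$ a non-normal Sylow $p_1$-subgroup and $F(G)=O_q(G)=:Q$ for a single prime $q$ and $G=QP$; here $G$ is $q$-closed, so $C_G(H)$ is $q$-closed, and since $C_G(H)$ is a Sylow tower group of type $\sigma$ on $\{p_1,q\}$ with $O_{p_1}(C_G(H))=O_{p_1}(G)\cap C_G(H)=1$ by Lemma \ref{2.7}(5), the ordering must put $q$ first, i.e. $q\prec p_1$; but then a Sylow tower group on $\{p_1,q\}$ with $q$ first has a normal Sylow $q$-subgroup and $p_1$-closed quotient, so $C_G(H)$ is nilpotent, whence $G$ is nilpotent by Lemma \ref{2.7}(3) — and a nilpotent group is a Sylow tower group of every type, the final contradiction. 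I expect the bookkeeping about \emph{which} prime is $\sigma$-first in $C_G(H)$ versus in $G$ (they have the same prime divisors by Lemma \ref{2.7}(1), so $\sigma$ induces the same ordering) to be the only genuinely delicate point, and it is resolved exactly as just sketched.
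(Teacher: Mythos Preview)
Your strategy is sound and ultimately reaches a valid contradiction, but it is far more laborious than the paper's proof, and the endgame as written contains a slip. The paper dispatches the theorem in a few lines by direct induction: since $\pi(G)=\pi(C_G(H))$ by Lemma~\ref{2.7}(1), the $\sigma$-first prime $p_1$ of $G$ is also the $\sigma$-first prime of $C_G(H)$; hence $C_G(H)$ is $p_1$-closed, so $G$ is $p_1$-closed by Theorem~\ref{T1}, and one finishes by applying the induction hypothesis to $G/O_{p_1}(G)$ via Lemmas~\ref{2.2}, \ref{2.3}, \ref{2.5}. In other words, the ``heart of the matter'' you identify --- that $G$ has a normal Sylow $p_1$-subgroup --- is exactly Theorem~\ref{T1}, already proved. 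Your minimal-counterexample reduction to a $\{p_1,q\}$-group with $F(G)=O_q(G)$ and $O_{p_1}(G)=1$ is really a second proof of Theorem~\ref{T1} embedded inside this argument.

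Regarding the endgame: once you derive $q\prec p_1$ from $O_{p_1}(C_G(H))=1$, you are already done, because $p_1$ was \emph{chosen} to be the $\sigma$-first prime of $|G|$, so $p_1\prec q$; that is the contradiction. Your continuation (``a Sylow tower group on $\{p_1,q\}$ with $q$ first has a normal Sylow $q$-subgroup and $p_1$-closed quotient, so $C_G(H)$ is nilpotent'') does not work as stated: a $q$-closed $\{p_1,q\}$-group whose quotient is a $p_1$-group is merely $q$-closed, not nilpotent. The correct nilpotency route, if you want one, is to keep $p_1\prec q$ (as defined), note that then $C_G(H)$ is $p_1$-closed while it is also $q$-closed (since $G$ is), hence nilpotent, and then apply Lemma~\ref{2.7}(3) --- which is precisely the engine inside Theorem~\ref{T1}. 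Either way the proof closes, but you should recognise you have simply rediscovered Theorem~\ref{T1}.
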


\begin{proof} Suppose that $C_{G}(H)$ is a Sylow tower group of type $\sigma$ and $p_1\prec p_2\prec\cdots\prec p_r$ is the ordering induced by $\sigma$ on the distinct prime divisors of $|G|$. Then by Lemma \ref{2.7}(1), $p_i$ divides $|C_G(H)|$ for $1\leq i\leq r$. Since $C_{G}(H)$ is a Sylow tower group of type $\sigma$, $C_{G}(H)$ is $p_1$-closed, and so $G$ is $p_1$-closed by Theorem \ref{T1}. Let $G_1$ be the normal Sylow $p_1$-subgroup of $G$. Then clearly, $G/G_1$ is $FH$-invariant. Since $G/G_1$ satisfies the hypothesis of the theorem by Lemmas \ref{2.2}, \ref{2.3} and \ref{2.5}, by induction, $G/G_1$ is a Sylow tower group of type $\sigma$, and so is $G$.
\end{proof}

\end{document}